%% This is file `elsarticle-template-6-num.tex',
%%
%% Copyright 2009 Elsevier Ltd
%%
%% This file is part of the 'Elsarticle Bundle'.
%% ---------------------------------------------
%%
%% It may be distributed under the conditions of the LaTeX Project Public
%% License, either version 1.2 of this license or (at your option) any
%% later version.  The latest version of this license is in
%%    http://www.latex-project.org/lppl.txt
%% and version 1.2 or later is part of all distributions of LaTeX
%% version 1999/12/01 or later.
%%
%% The list of all files belonging to the 'Elsarticle Bundle' is
%% given in the file `manifest.txt'.
%%
%% Template article for Elsevier's document class `elsarticle'
%% with numbered style bibliographic references
%%
%% $Id: elsarticle-template-6-num.tex 169 2009-10-08 09:17:45Z rishi $
%% $URL: http://lenova.river-valley.com/svn/elsbst/trunk/elsarticle-template-6-num.tex $
%%
%\documentclass[preprint,12pt]{elsarticle}

%% Use the option review to obtain double line spacing
\documentclass[preprint,%review,
12pt]{elsarticle}

%% Use the options 1p,twocolumn; 3p; 3p,twocolumn; 5p; or 5p,twocolumn
%% for a journal layout:
%% \documentclass[final,1p,times]{elsarticle}
%% \documentclass[final,1p,times,twocolumn]{elsarticle}
%% \documentclass[final,3p,times]{elsarticle}
%% \documentclass[final,3p,times,twocolumn]{elsarticle}
%% \documentclass[final,5p,times]{elsarticle}
%% \documentclass[final,5p,times,twocolumn]{elsarticle}

%% if you use PostScript figures in your article
%% use the graphics package for simple commands
%% \usepackage{graphics}
%% or use the graphicx package for more complicated commands
%% \usepackage{graphicx}
%% or use the epsfig package if you prefer to use the old commands
%% \usepackage{epsfig}

%% The amssymb package provides various useful mathematical symbols
%%\usepackage{amssymb}

%% The amsthm package provides extended theorem environments
%% \usepackage{amsthm}

%% The numcompress package shorten the last page in references.
%% `nodots' option removes dots from firstnames in references.
%% `nocompress' option prevent shortening of last page as
%% by default it will shorten.
\usepackage[nodots,nocompress]{numcompress}
\usepackage{amssymb,amsmath,amsthm%,yhmath%,pst-plot,pstricks,multido
}
\usepackage{pstricks,multido,pst-plot}
\usepackage[fleqn,tbtags]{mathtools}

\newtheorem{thm}{Theorem}[section]
\newtheorem{lemma}[thm]{Lemma}
\newtheorem{cor}[thm]{Corollary}

\newtheorem{prop}[thm]{Proposition}
\theoremstyle{definition}
\newtheorem{defn}{Definition}

\newtheorem{ex}{Example}
\newtheorem*{rem}{Remark}

\numberwithin{equation}{section}
\numberwithin{thm}{subsection}

\newcommand{\supp}{\mathop\mathrm{supp}\nolimits}

\newcommand{\intr}{\mathop\mathrm{Int}\nolimits}
\newcommand{\graph}{\mathop\mathrm{gr}\nolimits}
\newcommand{\hd}{\mathop\mathrm{dim_H}\nolimits}

%% The lineno packages adds line numbers. Start line numbering with
%% \begin{linenumbers}, end it with \end{linenumbers}. Or switch it on
%% for the whole article with \linenumbers after \end{frontmatter}.
%% \usepackage{lineno}

%% natbib.sty is loaded by default. However, natbib options can be
%% provided with \biboptions{...} command. Following options are
%% valid:

%%   round  -  round parentheses are used (default)
%%   square -  square brackets are used   [option]
%%   curly  -  curly braces are used      {option}
%%   angle  -  angle brackets are used    <option>
%%   semicolon  -  multiple citations separated by semi-colon
%%   colon  - same as semicolon, an earlier confusion
%%   comma  -  separated by comma
%%   numbers-  selects numerical citations
%%   super  -  numerical citations as superscripts
%%   sort   -  sorts multiple citations according to order in ref. list
%%   sort&compress   -  like sort, but also compresses numerical citations
%%   compress - compresses without sorting
%%
%% \biboptions{comma,round}

%\biboptions{super}

\journal{Internat. J. Approx. Reason.}

\begin{document}

\begin{frontmatter}

%% Title, authors and addresses

%% use the tnoteref command within \title for footnotes;
%% use the tnotetext command for the associated footnote;
%% use the fnref command within \author or \address for footnotes;
%% use the fntext command for the associated footnote;
%% use the corref command within \author for corresponding author footnotes;
%% use the cortext command for the associated footnote;
%% use the ead command for the email address,
%% and the form \ead[url] for the home page:
%%
%% \title{Title\tnoteref{label1}}
%% \tnotetext[label1]{}
%% \author{Name\corref{cor1}\fnref{label2}}
%% \ead{email address}
%% \ead[url]{home page}
%% \fntext[label2]{}
%% \cortext[cor1]{}
%% \address{Address\fnref{label3}}
%% \fntext[label3]{}

\title{Essential closures and supports of multivariate copulas}

%% use optional labels to link authors explicitly to addresses:
%% \author[label1,label2]{<author name>}
%% \address[label1]{<address>}
%% \address[label2]{<address>}

\author[chula]{P.~Ruankong}
\ead{ruankongpol@gmail.com}
\author[chula,cem]{S.~Sumetkijakan\corref{cor} %\corref{thanks}
}
\ead{songkiat.s@chula.ac.th}

\address[chula]{Department of Mathematics and Computer Science, Faculty of Science, \mbox{Chulalongkorn University}, \mbox{Phyathai Road}, Patumwan, Bangkok 10330, Thailand}
\address[cem]{Centre of Excellence in Mathematics, CHE, Si Ayutthaya Rd., Bangkok 10400, Thailand}
\cortext[cor]{Corresponding author}
%\cortext[thanks]{The second-named author is partially supported by the Centre of Excellence in Mathematics, the Commission on Higher Education, Thailand.}

\begin{abstract}
%% Text of abstract
We generalize the notion of essential closures which is used in formulating a geometric necessary condition for a set to be the support of a multivariate copula. Furthermore, in some special cases, we derive an explicit formula of the support in terms of essential closures and obtain a stronger necessary condition.
\end{abstract}

\begin{keyword}
%% keywords here, in the form: keyword \sep keyword
multivariate copulas\sep complete dependence\sep topological support\sep essential closure
%% MSC codes here, in the form: \MSC code \sep code
%% or \MSC[2008] code \sep code (2000 is the default)
\MSC 28A35
\end{keyword}

\end{frontmatter}

%%
%% Start line numbering here if you want
%%
% \linenumbers

%% main text
\section{Introduction}\label{intro}

For decades, supports of doubly stochastic measures have been studied by many mathematicians. A fundamental aspect of doubly stochastic measures is their support which is exactly where the probability mass is concentrated. Of particular interest is the study of supports of extreme doubly stochastic measures. A handful of necessary conditions and some characterizations have been obtained (see, for example, \cite{Char,Pres,Hairpin}).

We study the supports of multivariate copulas, or equivalently multivariate stochastic measures, from a different approach. First, we introduce the notion of \lq\lq essential closures,\rq\rq which is a generalization of the topological closure and a generalization of the essential closure introduced in \cite{Essential}. Essential closures seem to be a suitable tool to study the geometry of the supports of multivariate copulas, or equivalently multivariate stochastic measures. We obtain geometric neccesary conditions via \lq\lq essential closedness,\rq\rq which is a closedness with respect to an essential closure. Surprisingly, in some special cases, it turns out that we can derive an explicit formula of the support in terms of essential closures. One such case is the case of bivariate complete dependence copulas. And if we assume that the random variables are uniform on $[0,1]$, there is a measure-preserving transformation on $[0,1]$ connecting the two random variables. It has been observed that the graph of such a function and the support of the corresponding copula are closely related. For instance, it has been shown in \cite{Independence} that the mass of a copula is concentrated on the graph of a corresponding function ($V_C(\graph{f})=1$). In our work, we obtain that the support of such copula is an essential closure of the graph of a \lq\lq refinement\rq\rq of the function. In fact, we obtain a more general result in higher dimensions.

This paper is organized as follows. In section 2, we recall basic definitions and properties of related terms. In section 3, we introduce the notion of essential closures and state some of their properties, then we derive the main results. Finally, in section 4, we give proofs of the properties of essential closures.
\section{Preliminaries}

\subsection{Multivariate copulas and multivariate stochastic measures}

\begin{defn} A \emph{$k$-copula} is a function $C \colon [0,1]^k \rightarrow [0,1]$ satisfying
\begin{enumerate}
	\item $C(u_1,\dots,u_{i-1},0,u_{i+1},\dots,u_k)=0$,
	\item $C(1,\dots,1,u_i,1,\dots,1)=u_i$, and
	\item $C$ is $k$-increasing, i.e., for each hyperrectangle $B=\displaystyle\times_{i=1}^k{[x_i,y_i]} \subseteq [0,1]^k$,
	\begin{displaymath} V_C(B)=\sum_{z \in \times_{i=1}^k\{x_i,y_i\}} (-1)^{N(z)} C(z) \ge 0,\end{displaymath} where $N(z)$ denotes the size of the set $\{i\colon z_i=x_i\}$.
\end{enumerate}
\end{defn}

The set function $V_C$ can be extended uniquely to a Borel probability measure on $[0,1]^k$ and is often called the \emph{C-volume}. In fact, $V_C$ is a \emph{$k$-stochastic measure}, i.e., it pushforwards to $1$-dimensional Lebesgue measure on each axis. Moreover, the \emph{support of $C$} is defined to be the support of the measure $V_C$.

\begin{thm} [Sklar's theorem] Let $X_1,\dots,X_k$ be random variables on a common probability space. Let $H$ be their joint distribution and $F_i$ be the margin of $X_i$. Then there is a $k$-copula $C$ such that \begin{center}$H(x_1,\dots x_k)=C(F_1(x_1),\dots,F_n(x_k))$.\end{center} Moreover, if $X_1,\dots,X_k$ are continuous, then $C$ is unique.
\end{thm}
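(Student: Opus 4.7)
The plan is to proceed in two stages: first dispose of the continuous case, which is essentially a change of variables, then handle the general case by an extension/distributional-transform argument, and finally observe that uniqueness falls out of the continuous hypothesis almost for free.

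First I would introduce the (right-continuous) generalized inverse
\begin{equation*}
F_i^{-1}(u) = \inf\{x \in \mathbb{R} : F_i(x) \ge u\}, \qquad u \in (0,1),
\end{equation*}
and recall the standard fact that if $F_i$ is continuous then $F_i(X_i)$ is uniformly distributed on $[0,1]$. Under the continuity assumption I would then simply \emph{define} $C$ to be the joint cumulative distribution function of $(U_1,\dots,U_k) := (F_1(X_1),\dots,F_k(X_k))$. The three defining properties of a $k$-copula are immediate: boundary conditions at $0$ follow from each $U_i$ having no mass below $0$, the one-dimensional margins at $1$ are uniform by the observation above, and $k$-increasingness is exactly the statement that $V_C$ on a hyperrectangle equals the probability that $(U_1,\dots,U_k)$ lies in that rectangle. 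The Sklar identity then reduces to the computation $H(x_1,\dots,x_k)=P(X_1\le x_1,\dots,X_k\le x_k)=P(U_1\le F_1(x_1),\dots,U_k\le F_k(x_k))=C(F_1(x_1),\dots,F_k(x_k))$, using that $F_i$ is continuous and monotone.

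For the general (possibly discontinuous) case I would use the distributional transform: on an enlarged probability space, take $V_1,\dots,V_k$ i.i.d.\ uniform on $[0,1]$, independent of $X_1,\dots,X_k$, and set
\begin{equation*}
U_i := F_i(X_i-) + V_i\bigl(F_i(X_i)-F_i(X_i-)\bigr),
\end{equation*}
so that each $U_i$ is uniform on $[0,1]$ and $F_i^{-1}(U_i)=X_i$ almost surely. Defining $C$ as the joint distribution function of $(U_1,\dots,U_k)$ again yields a copula, and the Sklar identity follows from $\{U_i \le F_i(x_i)\} \supseteq \{X_i \le x_i\}$ together with the matching of probabilities through the construction of $U_i$. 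For uniqueness under continuity, note that when each $F_i$ is continuous its range is dense in $[0,1]$, so the value $C(u_1,\dots,u_k)$ is prescribed by the Sklar identity on a dense subset of $[0,1]^k$; since any copula is Lipschitz (a standard consequence of the $k$-increasing and boundary conditions), two copulas agreeing on a dense set coincide.

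The main obstacle is the general case: one must produce a copula whose values at the (possibly sparse) grid $\prod_i \overline{\mathrm{Range}(F_i)}$ are forced by $H$, and yet which remains $k$-increasing over all of $[0,1]^k$. The distributional transform elegantly bypasses the need to extend by hand a function defined only on the ranges of the $F_i$, but verifying that the resulting $C$ is genuinely a distribution function on $[0,1]^k$ with uniform margins (not just a set function on a sub-lattice) is the subtle step, and it is where the independence of the auxiliary $V_i$'s from the $X_i$'s is essential.
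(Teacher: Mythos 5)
The paper does not prove Sklar's theorem: it is stated in the Preliminaries purely as background for the reader, without proof, since it is a classical result. Consequently there is nothing in the paper to compare your attempt against, and I will instead evaluate it on its own merits.

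Your sketch is essentially correct and follows the standard modern route. The continuous case (define $C$ as the joint distribution function of $(F_1(X_1),\dots,F_k(X_k))$, observe that each $F_i(X_i)$ is uniform on $[0,1]$, and unwind $H$ through the monotone maps $F_i$) is the textbook proof, and the verification of the three copula axioms is as direct as you say. The general case via the distributional transform (R\"{u}schendorf's approach) is one of several standard ways to fill the gaps in the range of a discontinuous $F_i$, and the key facts you invoke --- that $U_i$ is uniform and that $F_i^{-1}(U_i)=X_i$ almost surely --- are exactly what make the Sklar identity carry over, because $\{U_i\le F_i(x_i)\}$ and $\{X_i\le x_i\}$ then agree up to a null set. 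The uniqueness argument (Lipschitz continuity of copulas plus density of $\mathrm{Ran}(F_1)\times\cdots\times\mathrm{Ran}(F_k)$ in $[0,1]^k$ under continuity of the margins) is correct. Two small points worth polishing if you write this out in full: the generalized inverse $F_i^{-1}(u)=\inf\{x:F_i(x)\ge u\}$ is the \emph{left}-continuous quantile function, not right-continuous; and in the continuous case the step $P(X_1\le x_1,\dots,X_k\le x_k)=P(U_1\le F_1(x_1),\dots,U_k\le F_k(x_k))$ deserves a line explaining why the symmetric differences of the events $\{X_i\le x_i\}$ and $\{F_i(X_i)\le F_i(x_i)\}$ are null (they are concentrated on the flat pieces of $F_i$, which carry no $X_i$-mass). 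An alternative classical route, used in older presentations, is to first show that the Sklar identity determines a well-defined grounded $k$-increasing function on $\prod_i \overline{\mathrm{Ran}(F_i)}$ and then extend it to a copula on all of $[0,1]^k$ by a multilinear (or more general monotone) interpolation; the distributional transform avoids that combinatorial extension step, which is the advantage you correctly identify.
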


It is worth mentioning that a multivariate copula can also be viewed as a joint distribution of uniform $[0,1]$ random variables. 

\subsection{Multivariate complete dependence}

\begin{defn} Given two nonempty sets $\mathbf{A}$ and $\mathbf{B}$ of random variables on a common probability space, we say that $\mathbf{A}$ is \emph{completely dependent} on $\mathbf{B}$ (viewed as a random vector) if, for every $X \in \mathbf{A}$, there exists a Borel measurable function $f$ such that $X=f(\mathbf{B})$ almost surely.
\end{defn}

\begin{defn} A $k$-copula $C$ is said to be a \emph{bipartite dependence $k$-copula} if $C$ is a $k$-copula of a collection of continuous random variables $\{X_1,\dots,X_k\}$ which can be partitioned into two sets so that one set is completely dependent on the other.\label{multicd}
\end{defn}

\begin{rem} One can always take $X_1,\dots,X_k$ in Definition \ref{multicd} to be uniformly distributed on $[0,1]$.
\end{rem}

\begin{ex} Let $f \colon [0,1] \rightarrow [0,1]$ be such that $f(x)=x$ if $x$ is irrational and $f(x)=1-x$ otherwise. Let $U$ be a uniform $[0,1]$ random variable and $V=f(U)$ almost surely. So $V$ is also a uniform $[0,1]$ random variable since $f$ is measure-preserving.
%%%%%%%%%%%%%%%%%%%%%%%%%%%%%%%%%%%%%%%%%%%%%%%%%%%%%%%%%%%%%
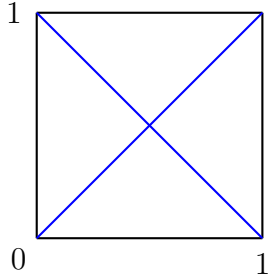
\begin{figure}[ht]
\psset{xunit=3cm,yunit=3cm}
\begin{center}
\begin{pspicture*}(-0.2,-0.2)(1.2,1.2)
\psline[linecolor=black](0,1)(1,1)
\psline[linecolor=black](1,0)(1,1)
\psline[linecolor=black](0,0)(0,1)
\psline[linecolor=black](0,0)(1,0)
\psplot[linecolor=blue,plotpoints=400]{0}{1}{x}
\psplot[linecolor=blue,plotpoints=400]{0}{1}{1 x sub}
\uput{0.2}[225](0,0){0}
\uput{0.2}[270](1,0){1}
\uput{0.2}[180](0,1){1}
\end{pspicture*}
\end{center}
  \caption[ ]{the closure of the graph of $f$ in Example \ref{ex:1}} 
 \label{fig:ex}
\end{figure}
%%%%%%%%%%%%%%%%%%%%%%%%%%%%%%%%%%%%%%%%%%%%%%%%%%%%%%%%
Notice that the closure of the graph of $f$, shown in Figure \ref{fig:ex}, is the union of the lines $y=x$ and $y=1-x$ in $[0,1]^2$. But since the copula of $(U,V)$ is the 2-dimensional Min copula $M_2$, its support is the line $y=x$ in $[0,1]^2$. Therefore, topological closure is not a suitable tool. A suitable tool should at least be able to get rid of the line $y=1-x$ which is not \lq\lq essential\rq\rq to the function $f$. We construct such a tool in the next section.\label{ex:1}
\end{ex}

%%%%%%%%%%%%%%%%%%%%%%%%%%%%%%%%%%%%%%%%%%%%%%%%%%%%%%%%%%%%%%%%%%%%%%%%%%%%%%%%%%%%%%%%%%%%%%%%%%%%%%%%%%%%%%%%%%%%%%%%%%%%%%%%%%%%%%%%%%%%%%%%%%%%%%%%%%%%%%%
\section{Essential closures and the main results}
\subsection{Essential closures}

In the sequel, we call a subspace $W$ of a Euclidean space spanned by standard basis elements \emph{a standard subspace}. Moreover, we denote $d$-dimensional Lebesgue measure and outer measure by $\lambda_d$ and $\lambda_d^*$, respectively. Also, $\lambda_0=\lambda_0^*$ denotes the counting measure. In addition, $\pi_W$ denotes the orthogonal projection onto the subspace $W$ and $\pi_j$ denotes the orthogonal projection onto the $j$-th axis. Note that $\pi_{\{0\}}(A) =\{0\}$ if $A$ is nonempty, otherwise $\pi_{\{0\}}(A) =\emptyset$. Finally, $\mathfrak{N}(x)$ denotes the set of open neighborhoods of the point $x$.

\begin{defn} Let $A$ be a subset of $\mathbb{R}^k$ and $0 \le d \le k$ be an integer. Then $x\in \widehat{A}^{^{\,d}}$ if and only if, for each $G \in \mathfrak{N}(x)$, there exists a $d$-dimensional standard subspace $W$ such that $\lambda_d^*(\pi_W(G \cap A)) >0.$ The set $\widehat{A}^{^{\,d}}$ is called the \emph{$d$-essential closure of $A$}. Moreover, $A$ is said to be \emph{$d$-essentially closed} if $A=\widehat{A}^{^{\,d}}.$ \end{defn}

%The above definition is for arbitrary sets. For Borel sets, their orthogonal projection images are Lebesgue measurable. Hence, the $d$-dimensional Lebesgue outer measure and the $d$-dimensional Lebesgue measure coincide.

The notion of essential closures is a generalization of the topological closure. To be precise, the $0$-essential closure and the topological closure coincide as is easily verified. The following properties of essential closures will be proved in the last section.

\begin{prop} Let $A,B \subseteq \mathbb{R}^k$ and $0 \le d \le k$ be an integer. Then the following hold.
\begin{enumerate}
	\item $\widehat{A}^{^{\,d}}$ is closed.
	\item Let $e \in \mathbb{Z}$ such that $0 \le e < d$. Then $\widehat{A}^{^{\,d}} \subseteq \widehat{A}^{^{\,e}}$. In particular, $\widehat{A}^{^{\,d}} \subseteq \overline{A}$.
	\item $\overline{\intr A} \subseteq \widehat{A}^{^{\,d}}$.
	\item $\widehat{A}^{^{\,d}} \subseteq \widehat{B}^{^{\,d}}$.
	\item $\widehat{A \cup B}^{d} = \widehat{A}^{^{\,d}} \cup \widehat{B}^{^{\,d}}$.
	\item $\widehat{A}^{^{\,d}} =\emptyset$ if and only if $\lambda_d^*(\pi_W(A)) =0$ for every $d$-dimensional standard subspace $W$.
%	\item Let $0 \le d \le n$, if $\lambda_d^*(\pi_W(A))=0$ for some $d$-dimensional standard subspace $W$, then $\widehat{A}^{^{\,d}} =\emptyset$.
	\item $\widehat{A}^{^{\,d}}$ is $d$-essentially closed.
\end{enumerate}
\label{ess-prop}
\end{prop}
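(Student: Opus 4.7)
My plan is to handle the seven items in order, leaning on the earlier parts as I go, with Lindel\"of covering reserved for the two subtle cases.

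Parts (1)--(4) are essentially bookkeeping from the definition. For (1), I show the complement is open: if $G\in\mathfrak{N}(x)$ witnesses that $x\notin\widehat{A}^{\,d}$, then that same $G$ is an open neighborhood of every one of its points and certifies that none of them lies in $\widehat{A}^{\,d}$. For (2), I embed any $e$-dimensional standard subspace $W'$ inside a $d$-dimensional standard $W=W'\oplus W''$ with $W''$ standard of dimension $d-e$; then $\pi_W(G\cap A)\subseteq \pi_{W'}(G\cap A)\times\pi_{W''}(G\cap A)$, and the usual $\sigma$-finiteness plus Fubini argument (cutting $W''$ into bounded slabs) shows that $\lambda_e^*(\pi_{W'}(G\cap A))=0$ forces $\lambda_d^*(\pi_W(G\cap A))=0$. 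Specializing $e=0$ and noting that $\widehat{A}^{\,0}=\overline{A}$ (since $\pi_{\{0\}}(S)$ has positive counting measure iff $S\neq\emptyset$) gives the ``in particular'' clause. For (3), an interior point carries a small open ball contained in $A$ whose projection onto any $d$-dimensional standard subspace is open in $\mathbb{R}^d$ and hence of positive $\lambda_d$-measure; then (1) lets me take closure. Part (4) is immediate monotonicity of the definition in $A$.

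For (5), $\supseteq$ comes from (4). For $\subseteq$, if $x$ lies outside both essential closures, I pick witnessing neighborhoods $G_A,G_B$, set $G=G_A\cap G_B$, and note that for every $d$-dimensional standard $W$,
\[
\pi_W\bigl(G\cap(A\cup B)\bigr)=\pi_W(G\cap A)\cup\pi_W(G\cap B)
\]
has zero outer measure by subadditivity. For (6), $\Leftarrow$ is just $\pi_W(G\cap A)\subseteq\pi_W(A)$. For $\Rightarrow$, assume $\widehat{A}^{\,d}=\emptyset$: each $x\in\mathbb{R}^k$ carries an open $G_x$ with $\lambda_d^*(\pi_W(G_x\cap A))=0$ for every $W$. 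Apply Lindel\"ofness of $\mathbb{R}^k$ to extract a countable subcover $\{G_{x_n}\}$; then for each $W$, $\pi_W(A)\subseteq\bigcup_n \pi_W(G_{x_n}\cap A)$ is $\lambda_d^*$-null by countable subadditivity.

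Item (7) is the subtle one, and the main obstacle. One direction is automatic: by (2) and (1), $\widehat{\widehat{A}^{\,d}}^{\,d}\subseteq\overline{\widehat{A}^{\,d}}=\widehat{A}^{\,d}$. For the reverse, I split $A=(A\cap\widehat{A}^{\,d})\cup(A\setminus\widehat{A}^{\,d})$ and invoke (5) to get $\widehat{A}^{\,d}=\widehat{A\cap\widehat{A}^{\,d}}^{\,d}\cup\widehat{A\setminus\widehat{A}^{\,d}}^{\,d}$. The first summand sits inside $\widehat{\widehat{A}^{\,d}}^{\,d}$ by (4), so it suffices to show the second summand is empty. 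Each $y\in A\setminus\widehat{A}^{\,d}$ comes with a neighborhood $G_y$ making all $d$-projections of $G_y\cap A$ null, hence a fortiori of $G_y\cap(A\setminus\widehat{A}^{\,d})$. The $G_y$'s cover $A\setminus\widehat{A}^{\,d}$; passing to a countable subcover by Lindel\"ofness and applying (6) (more precisely, the outer-measure bookkeeping inside its proof) to $A\setminus\widehat{A}^{\,d}$ yields $\widehat{A\setminus\widehat{A}^{\,d}}^{\,d}=\emptyset$. The genuine difficulty throughout (6) and (7) is that $A$ need not be $\sigma$-compact, measurable, or even bounded, which rules out a direct Fubini manipulation on $A$ itself and forces the argument to route through second countability of $\mathbb{R}^k$ together with countable subadditivity of $\lambda_d^*$.
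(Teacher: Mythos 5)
Your proposal is correct and follows essentially the same route as the paper's proof for all seven items: complement-openness (equivalently, the paper's closure argument) for (1), neighborhood intersection plus subadditivity for (5), Lindel\"of covering plus countable subadditivity for (6), and the decomposition $A=(A\cap\widehat{A}^{\,d})\cup(A\setminus\widehat{A}^{\,d})$ combined with (4)--(6) for (7). The one place you add genuine content is part (2): the paper simply asserts that $\lambda_e^*(\pi_V(G\cap A))=0$ for all $e$-dimensional standard $V$ forces $\lambda_d^*(\pi_W(G\cap A))=0$ for all $d$-dimensional standard $W$, whereas you justify it by splitting $W=W'\oplus W''$, observing $\pi_W(G\cap A)\subseteq\pi_{W'}(G\cap A)\times\pi_{W''}(G\cap A)$, and invoking the standard fact that a null set times anything is $\lambda_d$-null -- a detail worth having on record, together with the explicit check that $\widehat{A}^{\,0}=\overline{A}$ via the counting measure on $\pi_{\{0\}}$.
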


%\begin{ex} Let $A$ be the $X$-axis and $B$ be the $Y$-axis in $\mathbb{R}^2$. Then we have $\widehat{A}^{^{\,1}} \cup \widehat{B}^{^{\,1}}=\emptyset$ while $\widehat{A \cup B}^{_1} = \{0\}$. Therefore, $\widehat{A \cup B}^{_d} \neq \widehat{A}^{^{\,d}} \cup \widehat{B}^{^{\,d}}$ in general.
%\end{ex}

%\begin{ex} Let $A$ be the union of the $X$-axis and the $Y$-axis in $\mathbb{R}^2$. Then we see that $\widehat{A}^{^{\,1}} = \{0\}$ but $\widehat{\{0\}}^{\,\scriptscriptstyle1}=\emptyset$. Hence $\widehat{A}^{^{\,1}}$ is not $1$-essentially closed. Therefore, $\widehat{A}^{^{\,d}}$ is not $d$-essentially closed in general.
%\end{ex}

For the rest of this section, all sets are Borel measurable, hence their projection images are Lebesgue measurable.

%%%%%%%%%%%%%%%%%%%%%%%%%%%%%%%%%%%%%%%%%%%%%%%%%%%%%%%%%%%%%%%%%%%%%%%%%%%%%%%%%%%%%%%%%%%%%%%%%%%%%%%%%%%%%%%%%%%%%%%%%%%%%%%%%%%%%%%%%%%%%%%%%%%%%%%%%%%%%%%%%

\subsection{Supports of multivariate copulas}

%We begin by deriving a necessary condition for arbitrary $n$-copulas. 
%We begin by deriving a necessary condition for a set to be the support of an $n$-copula.

%\begin{lemma} Let $\mu$ be a measure on a measurable topological space $(X,\tau,\mathfrak{M})$ where $\tau \subseteq \mathfrak{M}$. If $\supp \mu$ is Lind\"{e}lof, then $\mu((\supp\mu)^c) =0$. 
%\end{lemma}

%\begin{proof} By the definition, the set of $\mu$-measure zero open sets is an open cover of $(\supp\mu)^c$. Hence there exists $V_1,V_2,\dots$, a countable subcover of $(\supp\mu)^c$. Then \begin{center}$\mu((\supp\mu)^c) \le \displaystyle\sum_{i=1}^{\infty}\mu(V_i)=0$.\end{center}
%\end{proof}

%\begin{thm} Let $(X,\tau, \mathfrak{M})$ be a measurable topological space such that $ \tau \subseteq \mathfrak{M}$. Let $\mu$ be a measure on $(X,\mathfrak{M})$ such that $\mu((\supp\mu)^c) =0$ and let $A \in \mathfrak{M}$. Then \begin{center}$\mu(A)= \mu(A \cap \supp\mu)$.\end{center}\end{thm}

%\begin{proof} Write $\mu(A)=\mu(A \cap \supp\mu) + \mu(A \cap (\supp\mu)^c)$. Observe that $A \cap (\supp\mu)^c$ is a subset of $(\supp\mu)^c$. Thus $\mu(A \cap (\supp\mu)^c) \le \mu((\supp\mu)^c) =0.$ Therefore we have $\mu(A)=\mu(A \cap \supp\mu)$.
%\end{proof}

\begin{thm} For every $k$-copula $C$, its support $\supp C$ is $1$-essentially closed.\label{strong}\end{thm}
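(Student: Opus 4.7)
The plan is to prove the two containments whose combination is the claim. The inclusion $\widehat{(\supp C)}^{\,1} \subseteq \supp C$ is immediate from Proposition~\ref{ess-prop}(2), which yields $\widehat{(\supp C)}^{\,1} \subseteq \overline{\supp C}$, together with the fact that the topological support of a Borel measure is closed. So the substance of the argument lies in the reverse containment $\supp C \subseteq \widehat{(\supp C)}^{\,1}$.

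To establish it, I would fix $x \in \supp C$ and an arbitrary $G \in \mathfrak{N}(x)$, and exhibit a $1$-dimensional standard subspace $W$ with $\lambda_1^*(\pi_W(G \cap \supp C)) > 0$. By definition of the topological support, $V_C(G) > 0$, and since $V_C$ is concentrated on $\supp C$ we also have $V_C(G \cap \supp C) > 0$. I would then invoke that $V_C$ is a $k$-stochastic measure: for any coordinate axis $W$, the pushforward $(\pi_W)_\ast V_C$ equals one-dimensional Lebesgue measure on $[0,1]$. Applying this with $E = G \cap \supp C$, together with the trivial set inclusion $E \subseteq \pi_W^{-1}(\pi_W(E))$, gives
$$ \lambda_1^*\bigl(\pi_W(G \cap \supp C)\bigr) \;\geq\; V_C\bigl(\pi_W^{-1}(\pi_W(G \cap \supp C))\bigr) \;\geq\; V_C(G \cap \supp C) \;>\; 0, $$
so the essential-closure condition is met at $x$, and hence $x \in \widehat{(\supp C)}^{\,1}$.

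The step that deserves the most care is the first inequality in the display. It is in fact an equality once one observes that $\pi_W(G \cap \supp C)$, being the projection of a Borel set, is analytic and therefore Lebesgue measurable, so the stochastic identity $V_C(\pi_W^{-1}(B)) = \lambda_1(B)$ applies directly; the use of the outer measure $\lambda_1^*$ in the definition of essential closure makes this minor measurability point essentially cost-free (one can also argue via a Borel measurable cover of $\pi_W(E)$). Conceptually, the proof is just the observation that every one-dimensional marginal of $V_C$ being Lebesgue forces the support of $V_C$ to spread out in \emph{every} coordinate direction near each of its points, which is a noticeably stronger statement than mere $1$-essential closedness.
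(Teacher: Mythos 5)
Your proof is correct and is essentially the paper's argument: the paper establishes the nontrivial containment $\supp C \subseteq \widehat{\supp C}^{\,1}$ by the contrapositive (if $\lambda_1(\pi_1(G\cap\supp C))=0$ then $V_C(G)=0$, so $x\notin\supp C$), whereas you run the same inequality $\lambda_1^*(\pi_W(G\cap\supp C)) \geq V_C(G\cap\supp C)$ in the forward direction, and the paper sidesteps the analytic-set measurability point you raise by declaring at the start of that subsection that all sets under consideration are Borel, hence have Lebesgue-measurable projections. Your closing observation that the argument in fact gives positive projection on \emph{every} axis, not merely one, is accurate but does not change the conclusion since the definition of $1$-essential closure only demands a single such axis.
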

\begin{proof} Observe that the set of $V_C$-measure zero open sets is an open cover of $(\supp C)^c$. Since Euclidean spaces are Lind\"{e}lof, there exists $\{G_1,G_2,\dots\}$, a countable subcover of $(\supp C)^c$. Then \begin{center}$V_C((\supp C)^c) \le \displaystyle\sum_{i=1}^{\infty}V_C(G_i)=0$.\end{center} For any Borel set $A \subseteq [0,1]^k$, write \begin{center}$V_C(A)=V_C(A \cap \supp C) + V_C(A \cap (\supp C)^c)$.\end{center} Observe that $V_C(A \cap (\supp C)^c) \le V_C((\supp C)^c) =0.$ Consequently, we have $V_C(A)=V_C(A \cap \supp C)$.

Since $\supp C$ is closed, it follows from Proposition \ref{ess-prop}(2) that $\widehat{\supp C}^{_1} \subseteq \overline{\supp C} =\supp C$. It is left to show that $\supp C \subseteq \widehat{\supp C}^{_1}$. By the definition of essential closures, if $x \notin \widehat{\supp C}^{_1}$, then there exists $G\in \mathfrak{N}(x)$ such that $\lambda_1(\pi_1(G \cap \supp C))=0$. Since $V_C$ is $k$-stochastic,
\begin{align*}
V_C(G)&=V_C(G \cap \supp C)\\
&\le V_C(\pi_1(G \cap \supp C) \times [0,1]^{k-1})\\
&= \lambda_1(\pi_1(G \cap \supp C))\\
&=0.
\end{align*} So $x \notin \supp C$.
\end{proof}

\begin{lemma} Let $C$ be a $k$-copula. Then, for any open set $G$ intersecting the support of $C$, the intersection cannot be a subset of a $(k-1)$-dimensional hyperplane perpendicular to an axis. \label{hyper}
\end{lemma}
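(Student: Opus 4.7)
The plan is a short proof by contradiction exploiting the fact that $V_C$ pushes forward to Lebesgue measure on each axis, so any hyperplane perpendicular to an axis is $V_C$-null.

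Suppose for contradiction that there exist an open set $G$ and a $(k-1)$-dimensional hyperplane $H$ perpendicular to, say, the $j$-th axis, with $\emptyset \neq G \cap \supp C \subseteq H$. Write $H = \pi_j^{-1}(\{c\})$ for some $c \in \mathbb{R}$. Pick $x \in G \cap \supp C$. Because $x \in \supp C$ and $G \in \mathfrak{N}(x)$, the definition of topological support gives $V_C(G) > 0$.

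On the other hand, I would reuse the argument at the top of the proof of Theorem \ref{strong}: the $V_C$-null open sets cover $(\supp C)^c$, and by the Lindel\"of property one extracts a countable subcover to conclude $V_C((\supp C)^c) = 0$, hence $V_C(A) = V_C(A \cap \supp C)$ for every Borel $A$. Applying this to $A = G$ and using the hypothesis $G \cap \supp C \subseteq H$,
\begin{equation*}
V_C(G) \;=\; V_C(G \cap \supp C) \;\le\; V_C(H).
\end{equation*}
But since $V_C$ is $k$-stochastic, $\pi_{j\#}V_C = \lambda_1$ on $[0,1]$, so
\begin{equation*}
V_C(H) \;=\; V_C(\pi_j^{-1}(\{c\})) \;=\; \lambda_1(\{c\}) \;=\; 0,
\end{equation*}
contradicting $V_C(G) > 0$.

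There is essentially no obstacle here; the only point that needs a little care is justifying $V_C((\supp C)^c) = 0$, and that is exactly the Lindel\"of argument already carried out in Theorem \ref{strong}, so I would either cite it or repeat it in one line. Everything else is a direct application of the $k$-stochastic property of $V_C$.
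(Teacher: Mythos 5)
Your proof is correct and takes essentially the same route as the paper's: both reduce to showing the hyperplane has $V_C$-measure zero via the $k$-stochastic pushforward property. You spell out the final contradiction (via $V_C(G)=V_C(G\cap\supp C)\le V_C(H)=0$) more explicitly than the paper, which leaves that step implicit.
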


\begin{proof} Let $W \subseteq \mathbb{R}^k$ be a $(k-1)$-dimensional hyperplane perpendicular to an axis. It suffices to show that $W \cap [0,1]^k$ has $V_C$-measure zero. Suppose $W$ is perpendicular to the $i$-th axis at a point $x \in [0,1]$. Since $V_C$ is $k$-stochastic, \begin{center}$V_C(W \cap [0,1]^k)=\lambda_1(\{x\})=0.$\end{center} Therefore, any Borel subset of $W$ inside $[0,1]^k$ also has $V_C$-measure zero.
\end{proof}

Theorem \ref{strong} and Lemma \ref{hyper} give geometric necessary conditions for a set to be the support of a multivariate copula. However, these necessary conditions are not sufficient even if we assume that the set has full projection image on each axis. For example, a hairpin-like set is $1$-essentially closed with full projection image on each axis but not always the support of a $2$-copula as mentioned in \cite{Hairpin}.

\begin{ex} As a consequence of Lemma \ref{hyper}, the set shown in Figure \ref{fig:cex} cannot be the support of a copula.

%%%%%%%%%%%%%%%%%%%%%%%%%%%%%%%%%%%%%%%%%%%%%%%%%%%%%%%%%%%%
\begin{figure}[ht]
\psset{xunit=3cm,yunit=3cm}
\begin{center}
\begin{pspicture*}(-0.2,-0.2)(1.2,1.2)
\psline[linecolor=black](0,1)(1,1)
\psline[linecolor=black](1,0)(1,1)
\psline[linecolor=black](0,0)(0,1)
\psline[linecolor=black](0,0)(1,0)
\psplot[linecolor=blue,plotpoints=400]{0}{0.5}{1 x sub}
\psline[linecolor=blue,plotpoints=400](0.5,0)(0.5,0.5)
\psline[linecolor=blue,plotpoints=400](0.5,0.5)(1,0.5)
\uput{0.2}[225](0,0){0}
\uput{0.2}[270](1,0){1}
\uput{0.2}[180](0,1){1}
\end{pspicture*}
\end{center}
  \caption[ ]{a set that is not the support of a copula} 
  \label{fig:cex}
\end{figure}
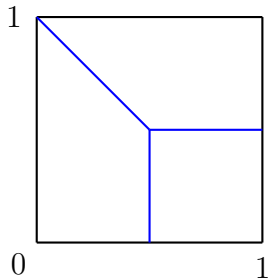
%%%%%%%%%%%%%%%%%%%%%%%%%%%%%%%%%%%%%%%%%%%%%%%%%%%%%%%%%%%%
\end{ex}

In the next subsection, we explore a special case in which it is possible to determine the supports via essential closures.

%%%%%%%%%%%%%%%%%%%%%%%%%%%%%%%%%%%%%%%%%%%%%%%%%%%%%%%%%%%%%%%%%%%%%%%%%%%%%%%%%%%%%%%%%%%%%%%%%%%%%%%%%%%%%%%%%%%%%%%%%%%%%%%%%%%%%%%%%%%%%%%%%%%%%%

\subsection{Supports of bipartite dependence multivariate copulas}

%We start by deriving a useful lemma. 

\begin{defn} Let $A \subseteq \mathbb{R}^k$ and $\sigma$ be a permutation on $\{1,2,\dots,k\}.$ Define \emph{the coordinate permutation of $A$ with respect to $\sigma$} by \begin{center}$A_{\sigma}= \{(x_{\sigma(1)},x_{\sigma(2)},\dots,x_{\sigma(k)}) \colon (x_1,x_2,\dots,x_k) \in A\}$.\end{center}
\end{defn}

\begin{lemma} Let $\sigma$ be a permutation on $\{1,2,\dots,k\}.$ Let $C$ be the $k$-copula of continuous random variables $X_1,X_2,\dots,X_k$ and let $C_{\sigma}$ be the $k$-copula of $X_{\sigma(1)},X_{\sigma(2)},\dots,X_{\sigma(k)}$. Then $\supp C_{\sigma} = (\supp C)_{\sigma}$.\label{permute}
\end{lemma}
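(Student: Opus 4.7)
The plan is to realize $V_{C_\sigma}$ as the pushforward of $V_C$ under the coordinate-permutation homeomorphism of $[0,1]^k$, and then use the general fact that topological supports transform covariantly under homeomorphisms.

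First I would introduce the map $T_\sigma\colon [0,1]^k \to [0,1]^k$ defined by $T_\sigma(x_1,\dots,x_k) = (x_{\sigma(1)},\dots,x_{\sigma(k)})$, so that by definition $T_\sigma(\supp C) = (\supp C)_\sigma$. Note $T_\sigma$ is a homeomorphism with inverse $T_{\sigma^{-1}}$. Next I would identify the two copula volumes. Without loss of generality (by the remark following Sklar's theorem and the continuity of the $X_i$) we may assume the $X_i$ are uniform on $[0,1]$, so that $V_C$ is the joint distribution of $(X_1,\dots,X_k)$ and $V_{C_\sigma}$ is the joint distribution of $(X_{\sigma(1)},\dots,X_{\sigma(k)})$. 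For any Borel set $B \subseteq [0,1]^k$,
\begin{align*}
V_{C_\sigma}(B) &= \Pr\bigl((X_{\sigma(1)},\dots,X_{\sigma(k)}) \in B\bigr) \\
&= \Pr\bigl((X_1,\dots,X_k) \in T_\sigma^{-1}(B)\bigr) = V_C\bigl(T_\sigma^{-1}(B)\bigr),
\end{align*}
so $V_{C_\sigma} = T_\sigma{}_{\#} V_C$.

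Finally I would invoke the standard fact that for a homeomorphism $T$ and a Borel measure $\mu$, $\supp(T_\# \mu) = T(\supp \mu)$. This is a direct neighborhood-chasing argument: $x \in \supp(T_\# \mu)$ iff every $G \in \mathfrak{N}(x)$ satisfies $\mu(T^{-1}(G)) > 0$; since $T^{-1}$ is a homeomorphism, $\{T^{-1}(G) : G \in \mathfrak{N}(x)\}$ is a neighborhood base at $T^{-1}(x)$, so the condition is equivalent to $T^{-1}(x) \in \supp \mu$, i.e., $x \in T(\supp \mu)$. Applying this with $T = T_\sigma$ and $\mu = V_C$ gives $\supp C_\sigma = T_\sigma(\supp C) = (\supp C)_\sigma$.

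The proof is essentially bookkeeping and the only mild subtlety is the clean identification $V_{C_\sigma} = T_\sigma{}_{\#} V_C$; once that is on the table, the support identity is purely topological and requires no measure-theoretic subtleties beyond the open-neighborhood definition of support.
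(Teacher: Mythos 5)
Your proof is correct and takes essentially the same approach as the paper's. Both hinge on the identity $V_{C_\sigma}(G_\sigma)=V_C(G)$ (your $V_{C_\sigma}=(T_\sigma)_{\#}V_C$) followed by the open-neighborhood characterization of the support; the paper simply writes out that last step directly rather than invoking the general pushforward-support lemma.
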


\begin{proof} Observe that, for every open set $G \subseteq \mathbb{R}^k$, \begin{align*} V_C(G)&=P((X_1,\dots,X_k) \in G)\\
&=P((X_{\sigma(1)},\dots,X_{\sigma(k)}) \in G_{\sigma})\\
&=V_{C_{\sigma}}(G_{\sigma}). \end{align*}
As a consequence, we have $x \in \supp C$ if and only if $x_{\sigma} \in \supp C_{\sigma}.$
Therefore, $\supp C_{\sigma} = (\supp C)_{\sigma}$.
\end{proof}
In view of Lemma \ref{permute}, in the case of bipartite dependence $k$-copulas, we may rearrange the random variables so that, for some $i$, each random variable $X_j$, $j\in\{i+1,\dots,k\}$, is completely dependent on the random vector $(X_1,\dots,X_i)$.

%Recall that there is a natural 1-1 correspondence between the set of $n$-copulas and the set of $n$-stochastic measures. Furthermore, the support of an $n$-copula is defined to be the support of its corresponding $n$-stochastic measure. So studying supports of $n$-copulas is the same as studying supports of $n$-stochastic measures. We are particularly interested in a special case where we have complete dependence.

\begin{defn} Let $\mathcal{F}=(f_1,\dots,f_m)$ be a tuple of functions from $D$ into $R$. Define \emph{the hypergraph of $\mathcal{F}$} by \begin{center}$\graph \mathcal{F} = \{(x,f_1(x),\dots,f_m(x)) \in D\times R^m\colon x \in D\}$.\end{center}
\end{defn}

\begin{defn} A function $\mathcal{F} \colon [0,1]^n \rightarrow [0,1]^m$ with Borel coordinate functions is said to have \emph{a Borel essential refinement} if there exists a function $\mathcal{F}^* \colon[0,1]^n \rightarrow [0,1]^m$ with Borel coordinate functions such that each corresponding pair of coordinate functions of $\mathcal{F}^*$ and $\mathcal{F}$ are equal almost everywhere and, for any open set $G \subseteq \mathbb{R}^{n+m}$, %$\lambda_n(\pi_{W_0}(G \cap \graph \mathcal{F^*}))=0$ implies $\lambda_1(\pi_j(G \cap \graph \mathcal{F}^*))=0$ for each $j >n$

\begin{center} $\lambda_n(\pi_{W_0}(G \cap \graph \mathcal{F^*}))=0$ implies that $\forall j>n,\lambda_1(\pi_j(G \cap \graph \mathcal{F}^*))=0$\end{center}
where $W_0$ is the subspace spanned by the first $n$ standard basis elements.
%there is no open set $G \subseteq \mathbb{R}^{n+m}$ such that $\lambda_n(\pi_{W_0}(G \cap \graph \mathcal{F^*}))=0$ while $\lambda_1(\pi_j(G \cap \graph \mathcal{F}^*))>0$ for some $j >n$

\end{defn}

\begin{rem} A function with Borel coordinate functions is Borel measurable. Hence its graph is a Borel measurable set.
\end{rem}

\begin{lemma} Every function $\mathcal{F} \colon [0,1]^n \rightarrow [0,1]^m$ with Borel coordinate functions has a Borel essential refinement.
\end{lemma}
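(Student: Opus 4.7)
The plan is to build $\mathcal{F}^*$ so that its graph is contained in $S := \supp \nu$, where $\nu := T_* \lambda_n$ is the pushforward of $\lambda_n$ under the Borel map $T(x) := (x, \mathcal{F}(x))$. Once $\graph \mathcal{F}^* \subseteq S$, the essential-refinement condition becomes immediate: for any open $G$,
\[
\lambda_n\bigl(\pi_{W_0}(G \cap \graph \mathcal{F}^*)\bigr) = \lambda_n\bigl(\{x : (x, \mathcal{F}^*(x)) \in G\}\bigr) = \nu^*(G),
\]
where $\nu^* := (\mathrm{id}, \mathcal{F}^*)_* \lambda_n$. Since $\mathcal{F}^* = \mathcal{F}$ a.e.\ forces $\nu^* = \nu$, an open $G$ with $\nu(G) = 0$ must be disjoint from $S$ (by the defining property of the support of a measure), hence $G \cap \graph \mathcal{F}^* \subseteq G \cap S = \emptyset$, so $\pi_j(G \cap \graph \mathcal{F}^*) = \emptyset$ for every $j > n$, trivially giving $\lambda_1(\pi_j(\cdots)) = 0$.

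To realize this plan, I first note that $S$ is compact, $\nu(S) = 1$, and the $\pi_{W_0}$-marginal of $\nu$ is $\lambda_n$, whose support is all of $[0,1]^n$, so $\pi_{W_0}(S) = [0,1]^n$. Consequently each vertical slice $S_x := \{y \in [0,1]^m : (x,y) \in S\}$ is a nonempty closed subset of $[0,1]^m$.

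Next I would produce a Borel selector $\phi : [0,1]^n \to [0,1]^m$ with $\phi(x) \in S_x$ via the Kuratowski--Ryll-Nardzewski selection theorem. Its hypothesis is that for every open $U \subseteq [0,1]^m$ the weak preimage
\[
\{x \in [0,1]^n : S_x \cap U \neq \emptyset\} \;=\; \pi_{W_0}\bigl(S \cap ([0,1]^n \times U)\bigr)
\]
is Borel, and this holds because $[0,1]^n \times U$ is open, hence $F_\sigma$; intersecting with the compact $S$ and projecting preserves $F_\sigma$ (projections of compact sets are compact). Setting $M := T^{-1}(S)$, a Borel set with $\lambda_n(M) = \nu(S) = 1$, I define
\[
\mathcal{F}^*(x) := \begin{cases} \mathcal{F}(x), & x \in M, \\ \phi(x), & x \notin M. \end{cases}
\]
Then $\mathcal{F}^*$ has Borel coordinate functions that agree with those of $\mathcal{F}$ on the full-measure set $M$, and $(x, \mathcal{F}^*(x)) \in S$ for every $x$ by construction.

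The main obstacle is securing the Borel selector. Compactness of the support $S$ is exactly what makes the relevant weak preimages $F_\sigma$, enabling Kuratowski--Ryll-Nardzewski; without this geometric input one would only obtain a selection at the level of analytic or universally measurable functions, which would not meet the Borel requirement in the definition of essential refinement. Everything else---the identity $\lambda_n(\pi_{W_0}(G \cap \graph \mathcal{F}^*)) = \nu(G)$, the almost-everywhere agreement, and the final support-emptiness implication---is then routine.
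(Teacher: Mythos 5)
Your proof is correct, but it takes a genuinely different route from the paper's. The paper's argument is an elementary exhaustion: it collects (via Lindel\"of) all open sets $V_\alpha$ that would witness a failure of the essential-refinement condition for $\mathcal{F}$ itself, observes that the union $V$ still projects under $\pi_{W_0}$ onto a $\lambda_n$-null set, encloses that projection in a Borel null set $B$, and then redefines $\mathcal{F}$ to be identically zero on $B$. The verification is then a short argument by contradiction: any offending $G$ for $\mathcal{F}^*$ would already have been an offending $G$ for $\mathcal{F}$, hence $G\subseteq V$, forcing the $j$-th projection of $G\cap\graph\mathcal{F}^*$ to be at most $\{0\}$. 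The whole thing uses nothing beyond the Lindel\"of property and $\sigma$-subadditivity.

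Your approach is heavier but more conceptual: you identify the essential refinement condition as the statement that the graph sits inside the support of the pushforward $\nu = (\mathrm{id},\mathcal{F})_*\lambda_n$, and you achieve that inclusion by redefining $\mathcal{F}$ on the $\lambda_n$-null set $T^{-1}(S)^c$ using a Borel selector $\phi$ of the nonempty closed slices $S_x$, obtained from Kuratowski--Ryll-Nardzewski with weak measurability supplied by the $F_\sigma$/compactness argument. The computation $\lambda_n(\pi_{W_0}(G\cap\graph\mathcal{F}^*)) = \nu(G)$, the tube-lemma step giving $\pi_{W_0}(S)=[0,1]^n$, and the invariance of $\nu$ under a.e.\ modification are all sound. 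What your route buys is a clean structural picture (the refined graph lives in $\supp\nu$, which, in the context of Theorem \ref{explicit}, is $\supp C$) and a non-arbitrary choice of $\mathcal{F}^*$; what it costs is dependence on a nontrivial selection theorem where the paper needs only Lindel\"of. Either proof establishes the lemma.
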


\begin{proof} The idea is to redefine $\mathcal{F}$ on a set of Borel measure zero. Let $W_0$ be the subspace spanned by the first $n$ standard basis elements. Let \begin{center}$V=\displaystyle \bigcup_{\alpha \in \Lambda} V_{\alpha}$\end{center} where the union is taken over all open sets $V_{\alpha}$ such that $\lambda_n(\pi_{W_0}(V_{\alpha} \cap \graph \mathcal{F})) = 0$ while $\lambda_1(\pi_j(V_{\alpha} \cap \graph \mathcal{F})) > 0$ for some $j >n$. By Lind\"{e}lof property of Euclidean spaces, there exists a countable subcollection: $\{V_1,V_2,\dots\}$ whose union is $V$. Then we have \begin{center} $\lambda_n(\pi_{W_0}(V \cap \graph \mathcal{F})) \le \displaystyle\sum_{i=1}^{\infty}\lambda_n(\pi_{W_0}(V_i \cap \graph \mathcal{F}))=0.$ \end{center} Thus there exists a Borel measure zero set $B \subseteq \mathbb{R}^{n}$ such that $\pi_{W_0}(V \cap \graph \mathcal{F}) \subseteq B$. Define $\mathcal{F}^*=\mathcal{F}-\mathcal{F}\cdot\chi_B$, i.e., $\mathcal{F}$ is redefined on $B$ to be identically zero. Consequently, the coordinate functions of $\mathcal{F}^*$ are Borel and each corresponding pair of coordinate functions of $\mathcal{F}^*$ and $\mathcal{F}$ are equal almost everywhere.

Suppose there is an open set $G \in \mathbb{R}^{n+m}$ such that $\lambda_n(\pi_{W_0}(G \cap \graph \mathcal{F}^*)) =0$ while $\lambda_1(\pi_j(G \cap \graph \mathcal{F}^*)) >0$ for some $j >n$. Observe that $\pi_{W_0}(G \cap \graph \mathcal{F})$ and $\pi_{W_0}(G \cap \graph \mathcal{F}^*)$ differ by a subset of $B$, which is a Borel measure zero set. Moreover, $\pi_j(G \cap \graph \mathcal{F})$ contains $\pi_j(G \cap \graph \mathcal{F}^*)-\{0\}$ due to the redefining. Thus $\lambda_n(\pi_{W_0}(G \cap \graph \mathcal{F})) =0$ while $\lambda_1(\pi_j(G \cap \graph \mathcal{F})) >0$. Hence $G \subseteq V$. This means that the points inside $G \cap \graph \mathcal{F}$ were redefined, i.e., $\lambda_1(\pi_j(G \cap \graph \mathcal{F}^*))=\lambda_1(\{0\})=0$, a contradiction. Therefore, $\mathcal{F}^*$ is a Borel essential refinement of $\mathcal{F}$.
\end{proof}

\begin{rem} Our results require that the random variables are uniform $[0,1]$. This is by no means restrictive since, for given continuous random variables $X_1,\dots,X_k$, each $U_i=F_{X_i}(X_i)$ is uniform on $[0,1]$ for each $i$. Moreover,  \begin{center}$C_{X_1,\dots,X_k}=C_{U_1,\dots,U_k}$.\end{center} So, it suffices to study only uniform $[0,1]$ random variables. %As a consequence, their transformations are measure-preserving Borel functions. However, measure-preserving functions are not always \lq\lq well-behaved.\rq\rq
\end{rem}

Now, we are ready to derive the main results. As mentioned in the introduction, the support of a complete dependence copula is closely related to the graph of a corresponding Borel function. In the next theorem, we derive an explicit formula of such a support in a finite dimension in terms of the essential closures.

\begin{thm} Let $U_1,U_2,\dots,U_{n+m}$ be uniform $[0,1]$ random variables and $C$ be their multivariate copula. Let $\mathbf{U}$ denote the random vector $(U_1,U_2,\dots,U_n)$. Suppose that $\lambda_n \ll V_{C_{\mathbf{U}}} \ll \lambda_n$ if $n \ge 2$. If, for each $i\in \{1,2,\dots,m\}$, $U_{n+i}$ is completely dependent on $\mathbf{U}$, i.e., there exist Borel functions $f_i\colon [0,1]^n \rightarrow [0,1]$ such that $U_{n+i} = f_i(\mathbf{U})$ almost surely, then \begin{center} $\supp C = \widehat{\graph\mathcal{F}^*}^{\scriptscriptstyle n}$ \end{center} where $\mathcal{F}^*=(f^*_1,\dots,f^*_m)$ is a Borel essential refinement of $\mathcal{F}=(f_1,\dots,f_m)$.\label{explicit}
\end{thm}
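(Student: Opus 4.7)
The plan is to prove both containments via the key pushforward identification
$$V_C(A) = V_{C_{\mathbf{U}}}\bigl(\pi_{W_0}(A \cap \graph \mathcal{F}^*)\bigr)$$
for every Borel $A \subseteq [0,1]^{n+m}$, where $W_0$ is spanned by the first $n$ standard basis vectors. This rests on the hypothesis $U_{n+i} = f_i(\mathbf{U})$ a.s.\ combined with $f_i = f_i^*$ $\lambda_n$-a.e., from which $f_i^*(\mathbf{U}) = f_i(\mathbf{U})$ a.s.\ follows: for $n \geq 2$ use $V_{C_{\mathbf{U}}} \ll \lambda_n$, and for $n = 1$ the marginal $V_{C_{\mathbf{U}}}$ is already $\lambda_1$. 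Thus $(U_1, \dots, U_{n+m})$ has the same law as $(\mathbf{U}, \mathcal{F}^*(\mathbf{U}))$, and a second application of $\lambda_n \ll V_{C_{\mathbf{U}}} \ll \lambda_n$ upgrades the identification to the working form: $V_C(A) = 0$ if and only if $\lambda_n(\pi_{W_0}(A \cap \graph \mathcal{F}^*)) = 0$.

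For $\supp C \subseteq \widehat{\graph \mathcal{F}^*}^{\scriptscriptstyle n}$, I argue by contrapositive. If $x \notin \widehat{\graph \mathcal{F}^*}^{\scriptscriptstyle n}$, the definition of essential closure supplies $G \in \mathfrak{N}(x)$ with $\lambda_n^*(\pi_W(G \cap \graph \mathcal{F}^*)) = 0$ for \emph{every} standard $n$-dimensional subspace $W$. Specializing to $W = W_0$ and invoking the identification yields $V_C(G) = 0$, so $x \notin \supp C$.

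The reverse inclusion is the substantive direction. Assume $x \notin \supp C$ and fix an open $G \in \mathfrak{N}(x)$ with $V_C(G) = 0$. The identification gives $\lambda_n(\pi_{W_0}(G \cap \graph \mathcal{F}^*)) = 0$, and the defining property of a Borel essential refinement then forces $\lambda_1(\pi_j(G \cap \graph \mathcal{F}^*)) = 0$ for each $j > n$. I must now upgrade these single-coordinate bounds to a bound on $\pi_W(G \cap \graph \mathcal{F}^*)$ for every standard $n$-dimensional subspace $W$. The case $W = W_0$ is immediate. Otherwise $W = \Sp\{e_{i_1}, \dots, e_{i_n}\}$ with at least one $i_k > n$ (since the only way to pick $n$ of the first $n$ indices is $W_0$), and from
$$\pi_W(G \cap \graph \mathcal{F}^*) \subseteq \pi_{i_1}(G \cap \graph \mathcal{F}^*) \times \cdots \times \pi_{i_n}(G \cap \graph \mathcal{F}^*)$$
a Fubini estimate (the $i_k$-factor is $\lambda_1$-null and the remaining factors sit in $[0,1]$) forces $\lambda_n^*(\pi_W(G \cap \graph \mathcal{F}^*)) = 0$. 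Thus $x \notin \widehat{\graph \mathcal{F}^*}^{\scriptscriptstyle n}$.

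The main obstacle is the bidirectional pushforward identification itself: both inclusions rely on translating the vanishing of $V_C(G)$ on an open $G$ into the vanishing of $\lambda_n(\pi_{W_0}(G \cap \graph \mathcal{F}^*))$, which is exactly where both halves of $\lambda_n \ll V_{C_{\mathbf{U}}} \ll \lambda_n$ become indispensable; without them the reverse inclusion breaks. The subsequent Fubini reduction from $\pi_W$ to single-coordinate projections is routine, but is precisely the step that activates the defining implication $\lambda_n(\pi_{W_0}) = 0 \Rightarrow \lambda_1(\pi_j) = 0$ supplied by the Borel essential refinement.
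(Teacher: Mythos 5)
Your proposal is correct and follows essentially the same route as the paper: establish the pushforward identity $V_C(G)=P(\mathbf{U}\in\pi_{W_0}(G\cap\graph\mathcal{F}^*))$, use the two-sided absolute continuity to translate this into $V_C(G)=0 \Leftrightarrow \lambda_n(\pi_{W_0}(G\cap\graph\mathcal{F}^*))=0$, and then push the single-subspace information to all standard $n$-dimensional subspaces via the defining property of the Borel essential refinement plus a Fubini estimate. The only difference is cosmetic — you state both inclusions in contrapositive form and make the Fubini/product-null step explicit, whereas the paper runs the reverse inclusion as a direct contradiction and leaves that step implicit.
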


\begin{proof} First of all, since $f^*_i=f_i$ almost everywhere, $f^*_i(\mathbf{U})=f_i(\mathbf{U})$ almost surely. Hence $C=C_{\mathbf{U},\mathcal{F}^*(\mathbf{U})}$. Let $W_0$ be the subspace spanned by the first $n$ standard basis elements. Observe that, for every open set $G \subseteq \mathbb{R}^{n+m}$,
\begin{center}
$V_C(G)=P((\mathbf{U},\mathcal{F}^*(\mathbf{U})) \in G)=P(\mathbf{U} \in \pi_{W_0}(G\cap \graph \mathcal{F}^*)).$
\end{center}
Moreover, since $\lambda_n \ll V_{C_{\mathbf{U}}} \ll \lambda_n$ if $n \ge 2$, we have \begin{center} $P(\mathbf{U} \in \pi_{W_0}(G\cap \graph \mathcal{F}^*))>0 $ if and only if $\lambda_n(\pi_{W_0}(G \cap \graph \mathcal{F}^*))>0.$\end{center}
By the definition of essential closures, $\supp C \subseteq \widehat{\graph\mathcal{F}^*}^{\scriptscriptstyle n}$.

Conversely, suppose there exists an $n$-dimensional standard subspace $W$ such that $\lambda_n(\pi_{W}(G \cap \graph \mathcal{F}^*))>0$ while $\lambda_n(\pi_{W_0}(G \cap \graph \mathcal{F}^*))=0$. Then there exists $j >n$ such that $\lambda_1(\pi_j(G \cap \graph \mathcal{F}^*))>0$. These contradict the fact that $\mathcal{F}^*$ is a Borel essential refinement. Therefore, for every $n$-dimensional standard subspace $W$, $\lambda_n(\pi_{W}(B \cap \graph \mathcal{F}^*))>0$ implies $\lambda_n(\pi_{W_0}(B \cap \graph \mathcal{F}^*))>0$. Thus $\widehat{\graph\mathcal{F}^*}^{\scriptscriptstyle n} \subseteq \supp C$.
\end{proof}

Note that each Borel function $f_i^*$ in Theorem \ref{explicit} is measure-preserving since it is a transformation between two uniform random variables.

As a consequence of Theorem \ref{explicit} and Proposition \ref{ess-prop}, the support of a bipartite dependence multivariate copula is essentially closed. The next result gives a geometric interpretation derived directly from the essential closedness of the support.

\begin{cor} Let $C$ be the bipartite dependence multivariate copula defined in Theorem \ref{explicit}. Then $\supp C$ is an $n$-essentially closed set. In particular, $\supp C$ has local Hausdorff dimension at least $n$.
\end{cor}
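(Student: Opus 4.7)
The plan is to deduce both claims directly from Theorem \ref{explicit} combined with a property of essential closures and a standard fact about Hausdorff dimension. For the first claim, Theorem \ref{explicit} identifies $\supp C$ with $\widehat{\graph \mathcal{F}^*}^{\scriptscriptstyle n}$, and Proposition \ref{ess-prop}(7) states that every set of the form $\widehat{A}^{^{\,d}}$ is $d$-essentially closed. Applying this with $d=n$ and $A=\graph\mathcal{F}^*$ immediately yields that $\supp C$ is $n$-essentially closed. This part requires no work beyond citation.

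For the local Hausdorff dimension statement, the plan is as follows. Fix $x \in \supp C$ and let $G \in \mathfrak{N}(x)$. Because $\supp C$ is $n$-essentially closed (just proved), we have $x \in \widehat{\supp C}^{\scriptscriptstyle n}$, so the definition of $n$-essential closure produces an $n$-dimensional standard subspace $W$ with $\lambda_n^*(\pi_W(G \cap \supp C)) > 0$. I will then invoke two standard facts: (i) any subset of $W \cong \mathbb{R}^n$ with positive $n$-dimensional Lebesgue outer measure has Hausdorff dimension equal to $n$, and (ii) orthogonal projection is $1$-Lipschitz, hence cannot increase Hausdorff dimension. Combining these,
\begin{equation*}
\hd(G \cap \supp C) \;\ge\; \hd\bigl(\pi_W(G \cap \supp C)\bigr) \;=\; n.
\end{equation*}
Since this holds for every open neighborhood $G$ of $x$, the local Hausdorff dimension of $\supp C$ at $x$ is at least $n$, and $x$ was arbitrary.

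There is no real obstacle: the first assertion is a one-line application of Proposition \ref{ess-prop}(7) via the explicit formula in Theorem \ref{explicit}, and the dimension bound is routine once one has the existence of a projection $\pi_W(G \cap \supp C)$ of positive $n$-dimensional outer measure. The only point to be a little careful about is the interpretation of ``local Hausdorff dimension at least $n$'' as meaning $\inf_{G \in \mathfrak{N}(x)} \hd(G \cap \supp C) \ge n$, which is exactly what the argument above delivers.
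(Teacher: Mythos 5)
Your proof is correct and follows essentially the same route as the paper: the first claim is an immediate application of Theorem \ref{explicit} together with Proposition \ref{ess-prop}(7), and the dimension bound follows because orthogonal projections are $1$-Lipschitz (hence cannot increase Hausdorff dimension) while a subset of $\mathbb{R}^n$ of positive $n$-dimensional Lebesgue outer measure has Hausdorff dimension $n$. The only cosmetic difference is that the paper runs the second part as a proof by contradiction, whereas you argue directly; the ingredients are identical.
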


\begin{proof} Since $\supp C$ can be written as an $n$-essential closure of a set, it is $n$-essentially closed by Proposition \ref{ess-prop}(7). Suppose there is an open set $G \subseteq \mathbb{R}^{n+m}$ such that $G \cap \supp C \neq \emptyset$ and the Hausdorff dimension $\hd(G \cap \supp C)<n$. Then, for each $n$-dimensional standard subspace $W$, we have that $\hd(\pi_W(G \cap \supp C)) <n$ which implies $\lambda_n(\pi_W(G \cap \supp C)) =0$, contradicting the fact that $\supp C$ is $n$-essentially closed.
\end{proof}

\begin{ex} This example demonstrates a way to extract a function, connecting the two uniform $[0,1]$ random variables, from the support of a shuffle of $M_2$.

%%%%%%%%%%%%%%%%%%%%%%%%%%%%%%%%%%%%%%%%%%%%%%%%%%%%%%%%%%%%%%%%%%%%%%%%%%%%%
\begin{figure}[ht]
\psset{xunit=3cm,yunit=3cm}
\begin{center}
\begin{pspicture*}(-0.2,-0.2)(1.2,1.2)
\psline[linecolor=black](0,1)(1,1)
\psline[linecolor=black](1,0)(1,1)
\psline[linecolor=black](0,0)(0,1)
\psline[linecolor=black](0,0)(1,0)
\psline[linecolor=black,linestyle=dotted](0.2,0)(0.2,1)
\psline[linecolor=black,linestyle=dotted](0.7,0)(0.7,1)
\psline[linecolor=black,linestyle=dotted](0,0.5)(1,0.5)
\psline[linecolor=black,linestyle=dotted](0,0.8)(1,0.8)
  \psplot[linecolor=blue,plotpoints=400]{0}{0.2}{x 0.8 add}
  \psplot[linecolor=blue,plotpoints=400]{0.2}{0.7}{0.7 x sub}
 \psplot[linecolor=blue,plotpoints=400]{0.7}{1}{x 0.7 sub 0.5 add}
\uput{0.2}[225](0,0){0}
\uput{0.2}[270](1,0){1}
\uput{0.2}[180](0,1){1}
\end{pspicture*}
\end{center}
  \caption[ ]{the support of a shuffle of $M_2$} 
  \label{fig:shuffle}
\end{figure}
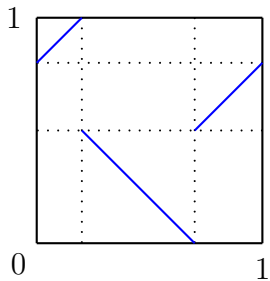 
%%%%%%%%%%%%%%%%%%%%%%%%%%%%%%%%%%%%%%%%%%%%%%%%%%%%%%%%%%%%%%%%%%%%%%%%%%

Observe that the support of a shuffle of $M_2$ is \lq\lq almost\rq\rq a function. In fact, removing a few points from the support of a shuffle of $M_2$ gives us the graph of a function. This can be done in many ways. One way to do it is to simply remove the rightmost point from each linear piece except the last piece. One can see that the remaining set is the graph of a function whose explicit formula can be derived. Then, by Theorem \ref{explicit} and the fact that shuffles of $M_2$ are uniquely determined by their supports, it ensures that we have the right function since the $1$-essential closure of the graph of that function is equal to the support we started with.
\end{ex}

\begin{ex} Let $M_3$ denote the 3-dimensional Min copula. Then $M_3=C_{U,U,U}$ for some uniform $[0,1]$ random variable $U$. Choose $\mathcal{F}=(id_{[0,1]},id_{[0,1]})$. Hence $M_3 = C_{U,\mathcal{F}(U)}$. Moreover, $\graph{\mathcal{F}}$ is $1$-essentially closed. Therefore, \begin{center}$\supp M_3 = \graph{\mathcal{F}} = \{(x,x,x) \colon x \in [0,1] \}$\end{center} which is the main diagonal of the unit cube $[0,1]^3$. Similarly, $\supp M_n$ is the main diagonal of the hypercube $[0,1]^n$.

Notice that, we need to carefully choose a bipartition in order to apply Theorem \ref{explicit}. For example, we can view the last random variable as being completely dependent on the first two. In this case, there are many functions which connect them, e.g., $f_1(x,y)=\frac{x+y}{2}$, $f_2(x,y)=\sqrt{xy}$, etc. Certainly, we cannot apply the theorem and one reason is because the copula of the first two random variables is $M_2$ whose induced measure is not absolutely continuous with respect to $\lambda_2$.
\end{ex}

%%%%%%%%%%%%%%%%%%%%%%%%%%%%%%%%%%%%%%%%%%%%%%%%%%%%%%%%%%%%%%%%%%%%%%%%%%%%%%%%%%%%%%%%%%%%%%%%%%%%%%%%%%%%%%%%%%%%%%%%%%%%%%%%%%%%%%%%%%%%%%%%%%%%%%
\section{Proof of basic properties of essential closures}

%\numberwithin{thm}{section}

In Proposition \ref{ess-prop}, each property follows directly from the definition of the essential closures, the properties of orthogonal projections and the properties of outer measures.

\begin{proof} 1. It suffices to show that the closure of $\widehat{A}^{^{\,d}}$ is a subset of itself. If $x$ is in the closure of $\widehat{A}^{^{\,d}}$ and $G \in \mathfrak{N}(x)$, then $G \cap \widehat{A}^{^{\,d}} \neq \emptyset$. Choose $y \in G \cap \widehat{A}^{^{\,d}}$. Thus $G \in \mathfrak{N}(y)$ and $y \in \widehat{A}^{^{\,d}}$. Therefore $\lambda_d^*(\pi_W(G \cap A)) >0$ for some $d$-dimensional standard subspace $W$. Hence $x \in \widehat{A}^{^{\,d}}$.

%1. It suffices to show that the closure of $\widehat{A}^{^{\,d}}$ is a subset of itself. If $x$ is in the closure of $\widehat{A}^{^{\,d}}$ and $G \in \mathfrak{N}(x)$, then $G \cap \widehat{A}^{^{\,d}} \neq \emptyset$. Choose $y \in G \cap \widehat{A}^{^{\,d}}$. Thus $G \in \mathfrak{N}(y)$ and $y \in \widehat{A}^{^{\,d}}$. Therefore, $\lambda_d^*(\pi_W(G \cap A)) >0$ for any $d$-dimensional standard subspace $W$. Hence $x \in \widehat{A}^{^{\,d}}$.

2. Let $0 \le e < d$ be integers. If $x \notin \widehat{A}^{^{\,e}}$, then there exists $G \in \mathfrak{N}(x)$ such that, for each $e$-dimensional standard subspace $V$, $ \lambda_e^*(\pi_V(G \cap A))=0$. Thus $\lambda_d^*(\pi_W(G \cap A))=0$ for each $d$-dimensional standard subspace $W$. Therefore $x \notin \widehat{A}^{^{\,d}}$.

%2. Let $0 \le e < d$ be integers. If $x \notin \widehat{A}^{^{\,e}}$, then there exists $G \in \mathfrak{N}(x)$ and an $e$-dimensional standard subspace $V$ such that $ \lambda_e^*(\pi_V(G \cap A))=0$. Thus $\lambda_d^*(\pi_W(G \cap A))=0$ where $W$ is a $d$-dimensional standard subspace containing $V$. Therefore, $x \notin \widehat{A}^{^{\,d}}$.

3. Assume that $\intr A$ is not empty. Let $x \in \intr A$. Then for any $G \in \mathfrak{N}(x)$, $G \cap \intr A
$ is a nonempty open subset of $\mathbb{R}^n$. Therefore, given a $d$-dimensional standard subspace $W$, $\pi_W(G \cap A)$ contains a nonempty open subset of $W$. Thus $\lambda_d^*(\pi_W(G \cap A)) >0$. Hence $\intr A \subseteq \widehat{A}^{^{\,d}}$. Taking closure on both sides, we obtain the desired result.

%3. Assume that $\intr A$ is not empty. Let $x \in \intr A$. Then, for any $G \in \mathfrak{N}(x)$, $G \cap \intr A$ is a nonempty open subset of $\mathbb{R}^n$. Therefore, given any $d$-dimensional standard subspace $W$, $\pi_W(G \cap A)$ contains a nonempty open subset of $W$. Thus $\lambda_d^*(\pi_W(G \cap A)) >0$. Hence $\intr A \subseteq \widehat{A}^{^{\,d}}$. Since $\widehat{A}^{^{\,d}}$ is a closed set, $\overline{\intr A} \subseteq \widehat{A}^{^{\,d}}$.

4. If $x \in \widehat{A}^{^{\,d}}$, then for any $G \in \mathfrak{N}(x)$, there exists a $d$-dimensional standard subspace $W$ such that $\lambda_d^*(\pi_W(G \cap A)) >0.$ Thus we have $\lambda_d^*(\pi_W(G \cap B)) \ge \lambda_d^*(\pi_W(G \cap A)) >0$. Hence $x \in \widehat{B}^{^{\,d}}$.

%4. If $x \in \widehat{A}^{^{\,d}}$, then for each $G \in \mathfrak{N}(x)$ and for each $d$-dimensional standard subspace $W$, $\lambda_d^*(\pi_W(G \cap A)) >0.$ Then $\lambda_d^*(\pi_W(G \cap B)) \ge \lambda_d^*(\pi_W(G \cap A)) >0$. Hence $x \in \widehat{B}^{^{\,d}}$.

%5. For any $x \in \mathbb{R}^n$ and $G\in \mathfrak{N}(x)$, $\lambda_d^*(\pi_W(G \cap A)) \le \lambda_d^*(\pi_W(A)) =0$. So, $x \notin \widehat{A}^{^{\,d}}$. Note that the converse is obviously false.

5. If $x \notin \widehat{A}^{^{\,d}}$ and $x \notin \widehat{B}^{^{\,d}}$, then there exists $G_1,G_2 \in \mathfrak{N}(x)$ such that $\lambda_d^*(\pi_W(G_1 \cap A))=0$ and $\lambda_d^*(\pi_W(G_2 \cap B))=0$ for each $d$-dimensional standard subspace $W$. Choose $G = G_1 \cap G_2 \in \mathfrak{N}(x)$. Thus \begin{center}$\lambda_d^*(\pi_W(G \cap (A \cup B))) \le \lambda_d^*(\pi_W(G_1 \cap A)) + \lambda_d^*(\pi_W(G_2 \cap B))=0$.\end{center} So $x \notin \widehat{A \cup B}^{d}.$ On the other hand, since $A,B \subseteq A \cup B$, we have $\widehat{A}^{^{\,d}} \cup \widehat{B}^{^{\,d}} \subseteq \widehat{A \cup B}^{d}$.

6. For any $x \in \mathbb{R}^n$, $G\in \mathfrak{N}(x)$ and $d$-dimensional standard subspace $W$, $\lambda_d^*(\pi_W( G \cap A)) \le \lambda_d^*(\pi_W(A)) =0$. So $\widehat{A}^{^{\,d}} = \emptyset$. On the other hand, if $\widehat{A}^{^{\,d}} =\emptyset$, then, for each $x \in \mathbb{R}^n$, $x \notin \widehat{A}^{^{\,d}}$. Hence, for each $x \in \mathbb{R}^n$, there exists $G_x \in \mathfrak{N}(x)$ such that $\lambda_d^*(\pi_W(G_x \cap A)) =0$ for each $d$-dimensional standard subspace $W$. But $\{G_x\}_{x \in \mathbb{R}^n}$ covers $\mathbb{R}^n$. Thus there exists a countable subcover: $\{G_1,G_2,\dots\}$. Therefore, \begin{center} $\lambda_d^*(\pi_W(A))\le \displaystyle\sum_{i=1}^{\infty} \lambda_d^*(\pi_W(G_i \cap A))=0$ \end{center} for each $d$-dimensional standard subspace $W$.

7. Observe that, for each $x \in A-\widehat{A}^{^{\,d}}$, $x \notin \widehat{A-\widehat{ A}^{^{\,d}}}^d$ otherwise $x \in \widehat{A}^{^{\,d}}$, a contradiction. Then there exist open sets $\{G_x\}_{x \in A-\widehat{A}^{^{\,d}}}$ such that $\lambda_d^*(\pi_W(G_x \cap (A-\widehat{A}^{^{\,d}}))) =0$ for each $d$-dimensional standard subspace $W$. Notice that $\{G_x\}_{x \in A-\widehat{A}^{^{\,d}}}$ is an open cover of $A-\widehat{A}^{^{\,d}}$. Hence there exists a countable subcover: $\{G_1,G_2,\dots\}$. Thus 
\begin{center}$\lambda_d^*(\pi_W(A-\widehat{A}^{^{\,d}}))\le \displaystyle\sum_{i=1}^{\infty}\lambda_d^*(\pi_W( G_i \cap (A-\widehat{A}^{^{\,d}})))=0$\end{center}
for each $d$-dimensional standard subspace $W$. Therefore, $\widehat{A-\widehat{A}^{^{\,d}}}^{d} = \emptyset$. Observe that \begin{center}$\widehat{A}^{^{\,d}} = \widehat{A \cap \widehat{A}^{^{\,d}}}^{d} \cup \widehat{A-\widehat{A}^{^{\,d}}}^{d} \subseteq \widehat{\widehat{A}^{^{\,d}}}^{d}$.\end{center} On the other hand, we have $\widehat{\widehat{A}^{^{\,d}}}^{d} \subseteq \overline{\widehat{A}^{^{\,d}}}=\widehat{A}^{^{\,d}}.$ Hence $\widehat{A}^{^{\,d}}$ is a $d$-essentially closed set.
\end{proof}

\end{document}